\newcommand{\mypar}[1]{\vspace{.5em}\noindent\textbf{#1}}
\newtheorem{proposition}{Proposition}
\newcommand{\R}{\mathbb{R}}
\newcommand{\Rc}{\overline{\R}}
\newcommand{\bfln}{\begin{equation}\begin{aligned}}
\newcommand{\efln}{\end{aligned}\end{equation}}
\newcommand{\bfl}{\begin{equation}\begin{aligned}}
\newcommand{\efl}{\end{aligned}\end{equation}}
\newcommand{\dx}{\,\text{d}x}
\newcommand{\into}{\int_{\Omega}}
\newcommand{\bs}{\boldsymbol}
\newcommand{\BV}{\text{BV}}
\newcommand{\TV}{\text{TV}}
\DeclareMathOperator{\sgn}{sgn}
\begin{document}

\title{Inverse Scale Space Iterations for Non-Convex Variational Problems Using Functional Lifting}

\author{Danielle Bednarski and Jan Lellmann \\ \\ \small{Institute of Mathematics and Image Computing, University of L\"ubeck, Germany} \\  \small{ \texttt{{\{bednarski,lellmann\}@mic.uni-luebeck.de}}} }
\date{}

\maketitle
 
\renewcommand{\abstractname}{\vspace{-\baselineskip}}

\begin{abstract}
\noindent \textbf{Abstract.} Non-linear filtering approaches allow to obtain decompositions of images with respect to a non-classical notion of scale. The associated inverse scale space flow  can be obtained using the classical Bregman iteration  applied to a convex, absolutely one-homogeneous regularizer.
In order to extend these approaches to general energies with non-convex data term, we apply the Bregman iteration to a lifted version of the functional with sublabel-accurate discretization. We provide a condition for the subgradients of the regularizer under which this lifted iteration reduces to the standard Bregman iteration. We show experimental results for the convex and non-convex case.
\end{abstract}

\section{Motivation and Introduction}

We consider variational image processing problems with energies of the form
\bfln
F(u) := \underbrace{\into \rho(x,u(x)) \dx}_{H(u)} + \underbrace{\into \eta(\nabla u(x)) \dx}_{J(u)} ,
\label{eq:problem}
\efln
where the integrand $\eta : \R^d \mapsto \R$ of the \emph{regularizer} is non-negative and convex, and the integrand $\rho : \Omega \times \Gamma \mapsto \Rc$ of the \emph{data term} $H$ is proper, non-negative and possibly non-convex with respect to~$u$. We assume that the domain $\Omega \subset \R^d$ is open and bounded and that the range, or \emph{label space}, $\Gamma \subset \R$ is compact. 

Such problems are common in image reconstruction, segmentation, and motion estimation  \cite{book_aubert,book_scherzer}. We are mainly concerned with three distinct problem classes.  Whenever we are working with the \emph{total variation} regularizer, we use the abbreviation TV-\eqref{eq:problem}. If the data term is furthermore given by
\begin{equation}
\rho(x,u(x)) = \frac{\lambda}{2}(u(x)-f(x))^2
\label{eq:rof_data}
\end{equation}
for some input $f$ and $\lambda > 0$ we use the abbreviation ROF-\eqref{eq:problem}. For  data term~\eqref{eq:rof_data} and  arbitrary convex, absolute one-homogeneous regularizer~$\eta$ we write OH-\eqref{eq:problem}.

Consider the so-called \emph{inverse scale space flow} (ISS) \cite{osher2005iterative,burger2006nonlinear,burger2016spectral} equation
\begin{equation} \partial_s p(s) =f -u(s,\cdot), \quad p(s) \in \partial J(u(s,\cdot)), \quad p(0)=0,
\end{equation}
where $J$ is assumed to be convex and absolutely one-homogeneous. The evolution $u:[0,T]\times \Omega \to \R$ starts at $u(0,\cdot)=\text{mean}(f)$ and $p(s)$ is forced to lie in the subdifferential of the term $J$. E.g., for total variation regularization $J=\TV$, the flow $u(s,\cdot)$ progressively incorporates details of finer scales contained in the input image  $f$ as $s$ increases; for $s\to\infty$ the flow converges to the input image. 

By considering the derivative $u_s$, one can even define a non-linear decomposition of the input $f$ \cite{burger2015spectral,gilboa2016nonlinear} based on the solution $u$ of the inverse scale space flow and derive non-linear filters. Similar ideas have been developed for variational models of the form OH-\eqref{eq:problem} and \textit{gradient flow} formulations   \cite{burger2006nonlinear,benning2012ground,gilboa2013spectral,
burger2016spectral,gilboa2017semi}.

\begin{figure}[t]
    \begin{subfigure}{.27\linewidth}
        \centering
        \includegraphics[width = \linewidth]{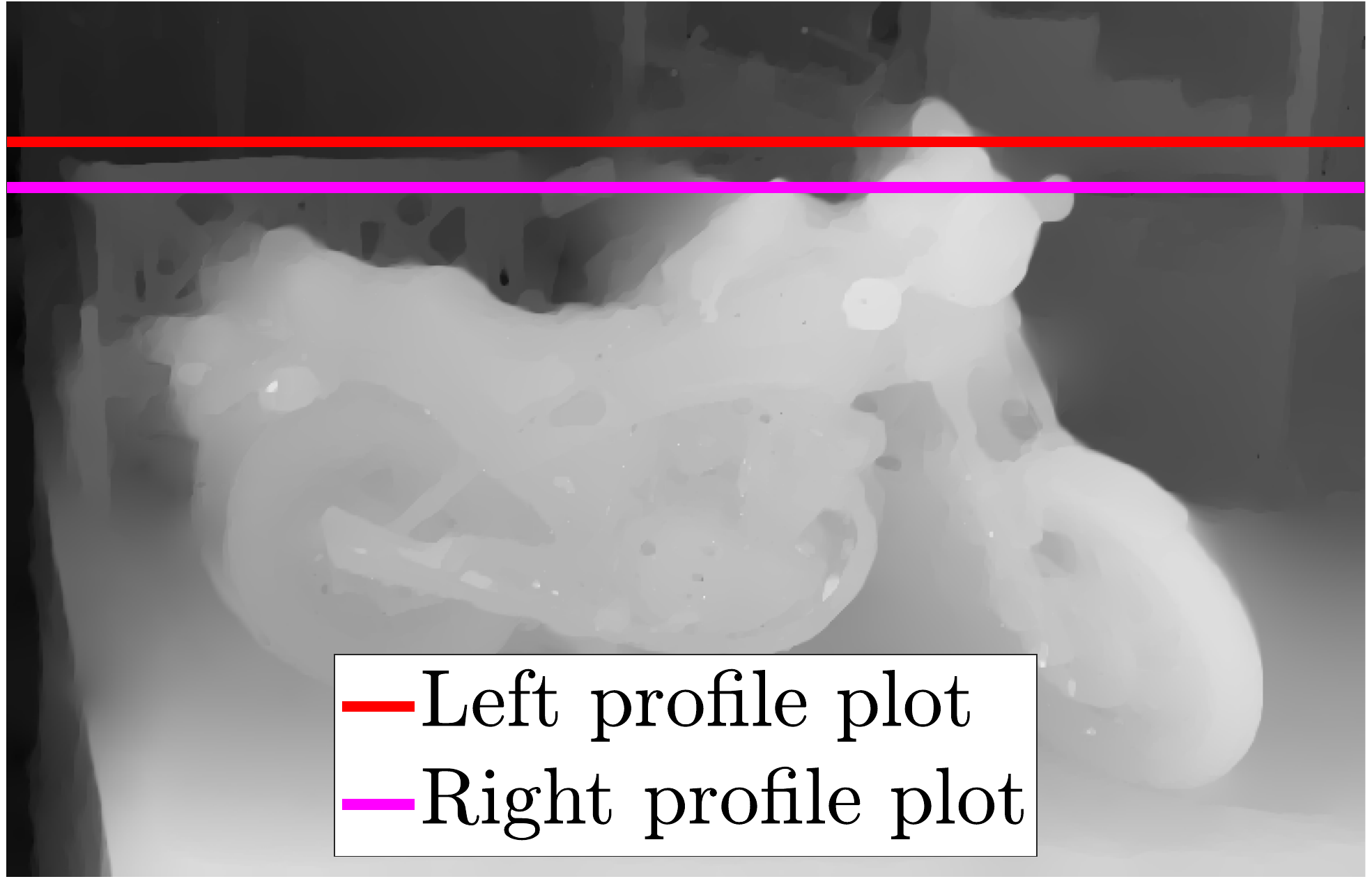}
    \end{subfigure}%
    \hspace{0.4cm}
    \begin{subfigure}{.30\linewidth}
        \centering
        \includegraphics[width = \linewidth]{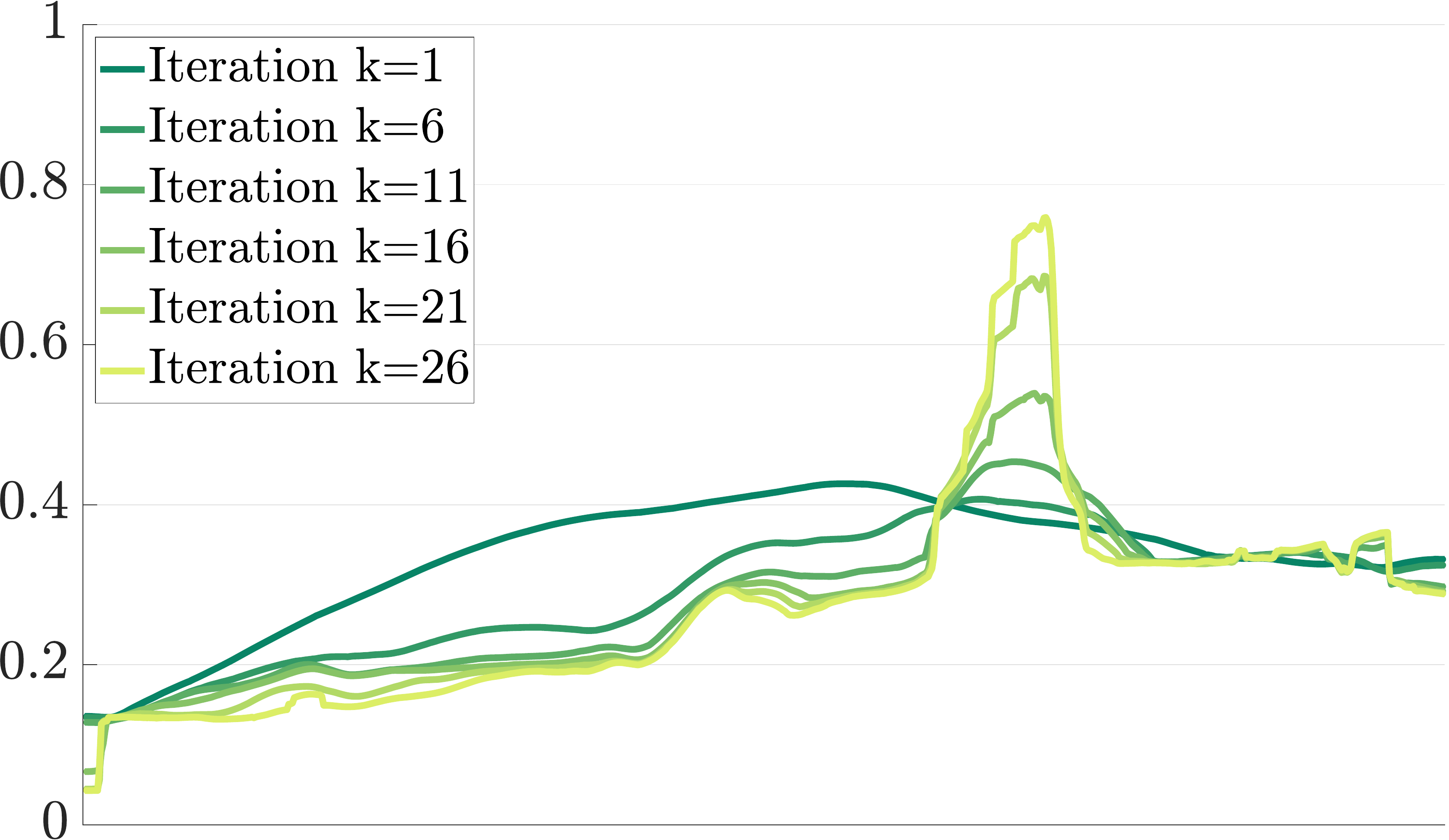}
    \end{subfigure}%
    \hspace{0.4cm}
    \begin{subfigure}{.30\linewidth}
        \centering
        \includegraphics[width = \linewidth]{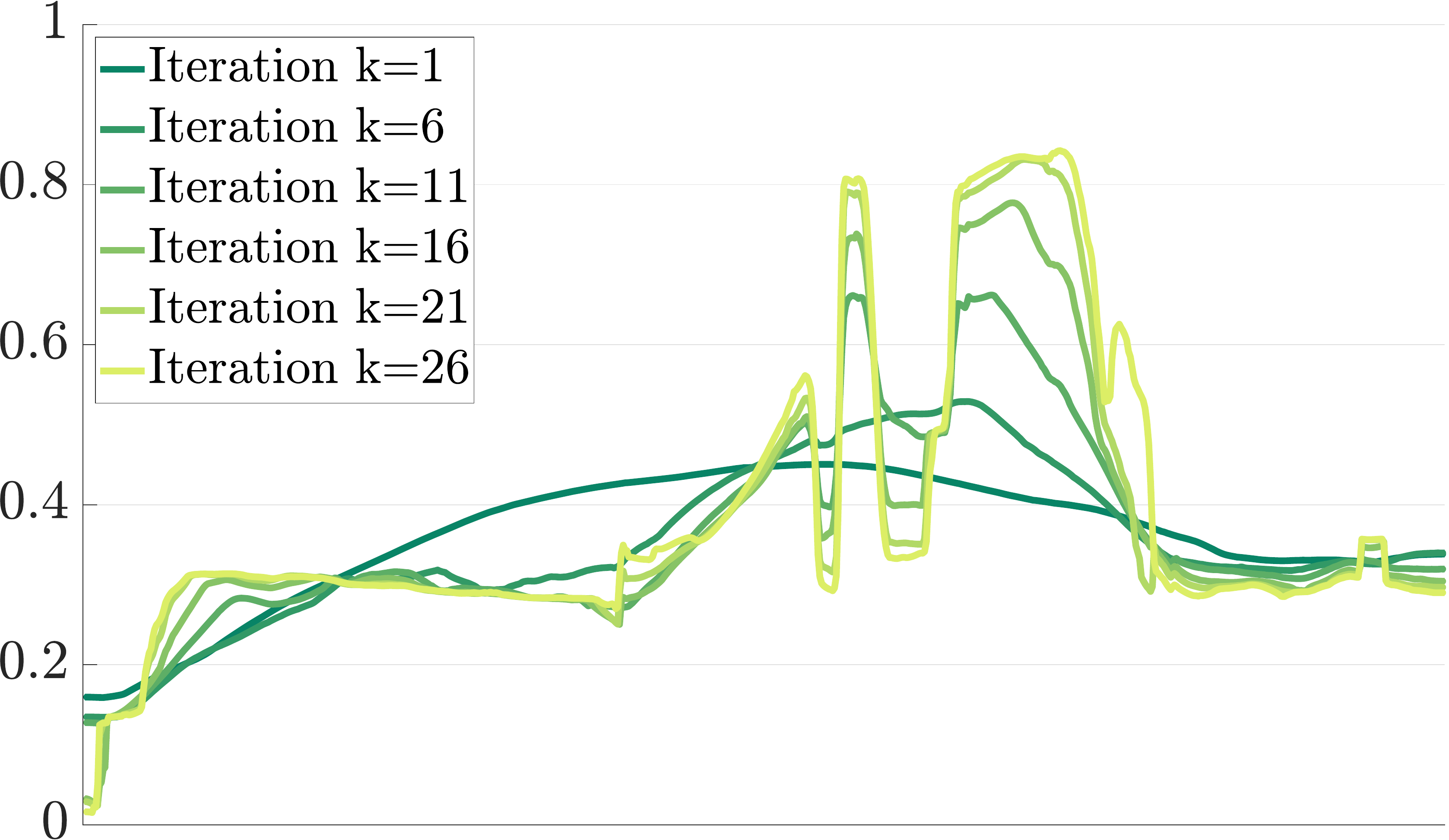}
        \end{subfigure}%
     \caption{\textbf{Scale-space of solutions for non-convex depth estimation.} 
Applying the sublabel-accurate lifting approach \cite{sublabel_cvpr} to the non-convex problem of depth estimation results in  a convex problem to which the  Bregman iteration~\cite{osher2005iterative} can be applied. In addition to the final depth map \textbf{(left)}, the Bregman iteration generates a scale space of solutions with increasing spatial detail, as can be seen from the two horizontal sections \textbf{(center, right).} 
%
}
        \label{fig:stereo_profile}
\end{figure}

For problems in the class OH-\eqref{eq:problem}, the inverse scale space flow can be understood \cite{burger2006nonlinear} as a continuous limit of the so-called \emph{Bregman iteration} \cite{osher2005iterative}. For both the data term $H$ and regularizer $J$ being non-negative and convex (!) the Bregman iteration is defined as: 

 \noindent              \fbox{%
                \begin{minipage}[t]{0.95\linewidth}
                \textbf{Algorithm 1: Bregman iteration}
                        \begin{itemize}
                        \item[] Initialize $p_0 = 0$ and repeat for $k = 1,2,...$
                                                \begin{flalign}
                                u_k & \in \arg\min_{u} \{ H(u) + J(u) - \langle p_{k-1}, u \rangle \} , \label{eq:breg_uk}\\
                                p_{k} & \in \partial J(u_k) \label{eq:breg_qk}. 
                        \end{flalign}                   
                        \end{itemize}                                           
                \end{minipage}} \\

In case of the ROF-\eqref{eq:problem} problem the subgradient $p_k$ can be chosen explicitly as $p_k = p_{k-1} - \lambda (u_k - f)$. 
Further extensions include the \emph{split Bregman method} for $\ell_1$-regularized problems \cite{goldstein2009split} and the \emph{linearized Bregman iteration} for compressive sensing and sparse denoising \cite{cai2009linearized,osher2011fast}.

However, applying the Bregman iteration to variational problems with non-convex data term $H$ is not trivial since the well-definedness of the iterations as well as the convergence results in \cite{osher2005iterative} rely on the convexity of the data term. 
In \cite{hoeltgen2015bregman}, the Bregman iteration was used to solve a non-convex \emph{optical flow} problem, however, the approach relies on an iterative reduction to a convex problem using first-order Taylor approximations. 

In this work, we aim to apply the Bregman iteration to energies with a non-convex data term such as the non-convex stereo matching problem (Fig.~\ref{fig:stereo_profile} and Fig. \ref{fig:results2}). In order to do so, we follow a \emph{lifting approach:} Instead of minimizing the non-convex problem
\bfln
\inf_{u\in U} \{ H(u)+J(u) \}\label{eq:prob-ulf}
\efln
over some suitable (discrete or function) space $U$ we solve a \emph{lifted problem}
\bfln
\inf_{\bs{u}\in \bs{U}} \{ \bs{H}(\bs{u})+\bs{J}(\bs{u}) \}\label{eq:prob-lf}
\efln
over a larger space $\bs{U}$ but with \emph{convex} energies $\bs{H}, \bs{J}$. The Bregman iteration can then be performed on the convex problem \eqref{eq:prob-lf}:

 \noindent              \fbox{%
                \begin{minipage}[t]{0.95\linewidth}
                \textbf{Algorithm 2: Lifted Bregman iteration}
                        \begin{itemize}
                        \item[] Initialize $\bs{p}_0 = 0$ and repeat for $k = 1,2,...$
                                                \begin{flalign}
                                \bs{u}_k & \in \arg\min_{\bs{u}\in\bs{U}} \{ \bs{H}(\bs{u}) + \bs{J}(\bs{u}) - \langle \bs{p}_{k-1}, \bs{u} \rangle \} , \label{eq:breg_uk_lifted}\\
                                \bs{p}_{k} & \in \partial \bs{J}(\bs{u}_k) \label{eq:breg_qk_lifted}. 
                        \end{flalign}                   
                        \end{itemize}                                           
                \end{minipage}} \\

This allows to extend the Bregman iteration to non-convex data terms. Of course it raises the question whether the iterates of Alg.~1 and Alg.~2 are related, and whether the lifted method still generates a scale space in practice. In the following, we will investigate these questions.

\mypar{Outline and Contribution.}
In section 2 we summarize the sublabel-accurate relaxation approach for problems of the form TV-\eqref{eq:problem} as presented in \cite{sublabel_cvpr}. In section 3 we derive conditions under which the original and lifted Bregman iteration are equivalent. These conditions are in particular met by the anisotropic TV. In section 4 we validate these findings experimentally by comparing the original and lifted iteration on the convex ROF-\eqref{eq:problem} problem and present first numerical results on the non-convex stereo matching problem.

\mypar{Related Work.} %
In a fully discrete setting with discretized domain and finite range $\Gamma$,
Ishikawa and Geiger proposed  first lifting strategies for the labeling problem \cite{ishikawa_geiger,ishikawa}. Later the relaxation of the labeling problem was studied in a spatially continuous setting with binary \cite{chan_vese,chan_relax} and multiple labels \cite{depth2,lifting_continuous_multiclass}. 

Our work is based on methods for scalar but continuous range $\Gamma$ with first-order regularization in the spatially continuous setting \cite{lifting_tv,lifting_global_solutions}: The feasible set of scalar-valued functions $u:\Omega\to\Gamma$ is embedded into the convex set of functions  $v:\Omega\times\Gamma\to[0,1]$ by associating each function $u$ with the characteristic function of the subgraph, 
i.e., $\bs{1}_u(x,z) := 1$ if $u(x) > z$ and $0$ otherwise.
To extend the energy $F$ in \eqref{eq:problem} for $\Gamma = \R$ onto this larger space, a lifted convex functional $\mathcal{F}$ is defined:
\bfln
\mathcal{F}(v) := \sup_{\phi \in \mathcal{K}} \int_{\Omega \times \Gamma} \langle \phi, Dv \rangle ,
\label{eq:lifted_F}
\efln
where $Dv$ denotes the distributional derivative of $v$. With $\eta^*$ denoting the pointwise conjugate of the regularizer, the admissible dual vector fields are given by
\bfln
\mathcal{K} := \{ (\phi_x, \phi_t) \in & C_0(\Omega\times\R; \R^d\times\R ):  \\ & \phi_t(x,t) + \rho(x,t) \geq \eta^*(\phi_x(x,t)), \qquad \forall (x,t) \in \Omega\times\R \} .
\label{eq:constraint_set_K}
\efln
In \cite{lifting_global_solutions} the authors show that $F(u) = \mathcal{F}(\bs{1}_u)$ holds for any $u\in W^{1,1}$. Moreover, 
if the non-convex set $ \{ \bs{1}_u : u\in W^{1,1}  \} $ is relaxed to the convex set
\bfln
C := \{ v \in & \BV_{\text{loc}} (\Omega\times\R, [0,1]): \\ & v(x,t) = 1\; \forall t \leq \min(\Gamma ), \qquad v(x, t) = 0\; \forall t > \max(\Gamma)  \},\label{eq:setc}
\efln 
any minimizer of the lifted problem $\inf_{v\in C}\mathcal{F}(v)$ can be transformed into a global minimizer of the original nonconvex problem $\inf_{u\in W^{1,1}}\mathcal{F}(\bs{1}_u)$  by  thresholding.

In practice, the discretization of the label space $\Gamma$ during the implementation process leads to artifacts and the quality of the solution strongly depends on the number and positioning of the chosen discrete labels. Therefore, it is advisable to employ a \emph{sublabel-accurate} discretization \cite{sublabel_cvpr}, which allows to preserve information about the data term in between discretization points, resulting in smaller problems.
In \cite{sublabel_discretization} the authors point out that this approach is closely linked to the approach in \cite{lifting_global_solutions} when a combination of piecewise linear and piecewise constant basis functions is used for discretization.

More recent developments in the field of functional lifting include an extension to the sublabel-accurate lifting approach to arbitrary convex regularizers \cite{mollenhoff2019lifting} and a connection to Dynamical Optimal Transport and the Benamou-Brenier formulation that also allows to incorporate higher-order regularization \cite{vogt2020connection}. 
   
\mypar{Notation.}
We denote the \emph{extended real line} as $\Rc := \R \cup \{ \pm \infty\}$. 
Given a function $f:\R^n\mapsto\Rc$ the conjugate $f^*:\R^n\mapsto\Rc$ is defined as \cite[Ch.~11]{book_rock_variational} 
\begin{equation}
f^*(u^*) := \sup_{u\in\R^n} \{ \langle u^*, u \rangle -f(u) \}.
\end{equation}
If $f$ has a proper convex hull, both the conjugate and biconjugate are proper, lower semi-continuous and convex. The \emph{indicator function} of a set $C$ is defined as $\delta_C(x):=0$ if $x\in C$ and $+\infty$ otherwise. Whenever $u$ denotes a vector, we use subscripts $u_k$ to indicate an iteration or sequence, and superscripts $u^k$ to indicate the $k$-th value of the vector.

\section{Sublabel-Accurate Lifting Approach}
\label{subsec:lifting}

For reference, we provide a short summary of the lifting approach with sublabel-accurate discretization for TV-\eqref{eq:problem} problems using the notation from \cite{sublabel_cvpr}. The approach comprises three steps:

\mypar{Lifting of the label space.} First, we choose $L$ labels $ \gamma_1 < \gamma_2 < ... < \gamma_L $ such that $ \Gamma = [ \gamma_1, \gamma_L ] $. These labels decomposese the label space $\Gamma$ into $l := L-1$ \emph{sublabel spaces} $\Gamma_i := [\gamma_i, \gamma_{i+1}]$. Any value in $\Gamma$ can be written as
\begin{equation}
\gamma_i^{\alpha} := \gamma_i + \alpha (\gamma_{i+1} - \gamma_i ),
\label{eqdef:sa_gammaia}
\end{equation}
for some $i \in \{1, 2, ..., l\}$ and $\alpha \in [0,1]$. The lifted representation of such a value in $\R^l$ is defined as
\begin{equation}
\bs{1}_i^{\alpha} := \alpha \bs{1}_i + (1-\alpha )\bs{1}_{i-1},
\end{equation}
where $\bs{1}_i \in \R^l$ is the vector of $i$ ones followed by $l-i$ zeroes. The -- non-convex -- \emph{lifted label space} is given as
$\boldsymbol{\Gamma} := \{ \bs{1}_i^{\alpha}  \in\R^l |  i \in \{ 1,2,...,l \}, \alpha \in [0,1] \}$. 
Any lifted value $\bs{u}(x) = \bs{1}_i^{\alpha} \in \boldsymbol{\Gamma}$ can be  mapped uniquely to the equivalent value in the unlifted label space by applying 
\begin{equation}
u(x) = \gamma_1 + \sum_{i=1}^l \bs{u}^i(x)(\gamma_{i+1} - \gamma_i ).
\label{eq:projection}\end{equation}
We refer to such functions $\bs{u}$ as \emph{sublabel-integral.}

\mypar{Lifting of the data term.} Next, a lifted formulation of the data term is derived that 
in effect approximates the energy locally convex between neighboring labels. 
For the possibly non-convex data term of \eqref{eq:problem}, the lifted -- yet still non-convex -- representation for fixed $x\in\Omega$ is defined as $\bs{\rho}:\R^l \mapsto \Rc$,
\begin{align}
\bs{\rho}(\bs{u}) := \inf_{i\in\{1,...,l\}, \alpha \in [0,1]}  \left \{ \rho(\gamma_i^{\alpha}) + \delta_{\bs{1}_i^{\alpha}}(\bs{u}) \right \}.
\label{eq:def_data_integrand}
\end{align}
Note that the domain is $\R^l$ and not just $\bs{\Gamma}$. Outside of the lifted label space $\boldsymbol{\Gamma}$  the lifted representation $\bs{\rho}$ is set to $\infty$. Applying the definition of Legendre-Fenchel conjugates twice to the integrand of the data term results in a relaxed -- and convex -- data term:
\begin{equation}
 \bs{H}(\bs{u}) = \int_{\Omega} \bs{\rho}^{**}(x, \bs{u}(x)) dx.
\label{eq:def_data}
\end{equation}
For explicit expressions of $\bs{\rho}^{**}$ in the linear and non-linear case we refer  to \cite[Prop.~1, Prop.~2]{sublabel_cvpr}.

\mypar{Lifting of the total variation regularizer.} Lastly, a lifted representation of the (isotropic) total variation regularizer is established, building on the theory developed in the context of multiclass labeling approaches \cite{lifting_continuous_multiclass,lifting_tv_local_envelope}.
For fixed $x \in \Omega$ the lifted -- and non-convex -- integrand $\boldsymbol{\phi}: \R^{l\times d} \mapsto \Rc$ is defined:
\begin{align}
\boldsymbol{\phi}(\bs{g}) := &\inf_{1\leq i\leq j\leq l, \alpha, \beta \in [0,1]} 
| \gamma_i^{\alpha} - \gamma_j^{\beta} | \cdot \| v \|_2 + \delta_{(\bs{1}_i^{\alpha} - \bs{1}_j^{\beta}) v^{\top}}(\bs{g}) . 
\label{eq:def_reg_integrand}
\end{align}
Applying the definition of Legendre-Fenchel conjugates twice to the lifted integrand of the regularizer results in a relaxed -- and convex -- regularization term:
\begin{flalign}
\bs{TV}(\bs{u}) := \int_{\Omega} \boldsymbol{\phi}^{**}(D\bs{u}),
\label{eq:def_reg}
\end{flalign}
where $D\bs{u}$ is the distributional derivative in the form of a Radon measure. For isotropic TV, it can be shown that for  $\bs{g}\in\R^{l\times d}$, 
\begin{flalign}
\bs{\phi}^{**}(\bs{g}) &= \sup_{\bs{q}\in\mathcal{K}_{\text{iso}}} \langle \bs{q}, \bs{g} \rangle , \\
\mathcal{K}_{\text{iso}} &= \left\{ \bs{q}\in\R^{l\times d} \quad  \middle| \quad \|  \bs{q}_i \|_2 \leq \gamma_{i+1} -\gamma_i, \quad \forall i=1,...,l \right\}.
\label{eq:K_iso}
\end{flalign}  
For more details we refer to \cite[Prop.~4]{sublabel_cvpr} and \cite{lifting_tv_local_envelope}. 
Unfortunately isotropic TV in general does not allow to prove global optimality for the discretized system. Therefore we also consider the lifted anisotropic ($L^1$) TV, by replacing \eqref{eq:K_iso} with
\begin{flalign}
\label{eq:K_an}
\mathcal{K}_{\text{an}} &= \left\{ \bs{q}\in\R^{l\times d} \quad \middle| \quad \| \bs{q}_i \|_\infty \leq \gamma_{i+1} -\gamma_i, \quad \forall i = 1,...,l \right\}\\
&= \bigcap_{j=1, \hdots  ,d} \left\{  \bs{q} \in \R^{l\times d} \quad \middle| \quad  \| \bs{q}_{i,j} \|_2 \leq \gamma_{i+1} - \gamma_i , \quad \forall i=1,...,l \right\} .
\end{flalign} 

Together, the previous three sections allow us to formulate a version of the problem of minimizing the lifted energy \eqref{eq:lifted_F} over the relaxed set \eqref{eq:setc} that is discretized in the label space $\Gamma$:
\begin{equation}
\inf_{\bs{u}\in\BV(\Omega,\bs{\Gamma})} \int_{\Omega} \bs{\rho}^{**}(x, \bs{u}(x)) +  \int_{\Omega} \boldsymbol{\phi}^{**}(D\bs{u}).
\end{equation}
Once the non-convex set $\bs{\Gamma}$ is relaxed to its convex hull, we obtain a fully convex lifting of problem TV-\eqref{eq:problem} similar to \eqref{eq:prob-lf}, which can now be spatially discretized.

\section{Equivalency of the Lifted Bregman Iteration}

This chapter addresses the question under which conditions Alg.~1 and Alg.~2 are equivalent. We stipulate a sufficient condition on the subgradients used in the Bregman iteration and prove in chapter 4 that this condition is met in case of the anisotropic TV regularizer. The key idea is to note that the Bregman iteration amounts to extending the data term by a linear term, and that the sum of the separately relaxed terms is point-wise equal to the relaxation of their sum. Note that this additivity does not hold for general sums.

The following considerations are formal due to the mostly pointwise arguments; we leave a rigorous investigation in the function space to future work. However, they can equally be understood in the spatially discrete setting with finite $\Omega$, where arguments are more straightforward. For readability, we consider a fixed $x \in \Omega $ and omit $x$ in the arguments.

\begin{proposition}
Assume $\rho_1,\rho_2,h: \Gamma \mapsto \Rc$ with
\begin{align}
\rho_2(u) := \rho_1 (u) - h(u), \qquad h(u) := pu, \qquad p\in\R,
\end{align}
where $\rho_1$ and $\rho_2$ should be understood as two different  data terms in~\eqref{eq:problem}. Define
\begin{equation}
\tilde{\bs{\gamma}} := \begin{pmatrix} \gamma_2 - \gamma_1, & \hdots, & \gamma_{L} - \gamma_l \end{pmatrix}^{\top}\label{eq:bs_gamma}
\end{equation}
Then, for the lifted representations $\bs{\rho}_1,\bs{\rho}_2,\bs{h} : \R^l \mapsto \Rc$ in \eqref{eq:def_data_integrand}, it holds 
\begin{equation}
\bs{\rho}_2^{**}(\bs{u}) \quad = \quad \bs{\rho}_1^{**}(\bs{u}) - \bs{h}^{**}(\bs{u}) \quad  = \quad  \bs{\rho}_1^{**}(\bs{u}) - \langle p \tilde{\bs{\gamma}}, \bs{u} \rangle  .
\end{equation}
\label{prop:lifted_sum}
\end{proposition}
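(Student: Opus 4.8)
The plan is to describe the lifted integrands explicitly on the lifted label space $\bs{\Gamma}$ and then reduce the whole statement to two elementary facts about biconjugates. Fix $\bs{u}\in\bs{\Gamma}$. Although the pair $(i,\alpha)$ with $\bs{u}=\bs{1}_i^\alpha$ need not be unique — the vertex $\bs{1}_j$ is represented both by $\bs{1}_j^1$ and by $\bs{1}_{j+1}^0$ — every admissible pair yields the same value $\gamma_i^\alpha$, namely $\pi(\bs{u}):=\gamma_1+\langle\tilde{\bs{\gamma}},\bs{u}\rangle$, the value produced by the projection \eqref{eq:projection}. Hence the infimum in \eqref{eq:def_data_integrand} is taken over a set on which the objective is constant, so on $\bs{\Gamma}$ we have $\bs{\rho}_m(\bs{u})=\rho_m(\pi(\bs{u}))$ for $m=1,2$ and $\bs{h}(\bs{u})=p\,\pi(\bs{u})$, while all three lifted integrands equal $+\infty$ off $\bs{\Gamma}$. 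Writing $\ell(\bs{u}):=p\,\pi(\bs{u})=\langle p\tilde{\bs{\gamma}},\bs{u}\rangle+p\gamma_1$, an affine function that is finite on all of $\R^l$, the relation $\rho_2=\rho_1-h$ on $\Gamma$ lifts to the pointwise identities $\bs{h}=\ell+\delta_{\bs{\Gamma}}$ and $\bs{\rho}_2=\bs{\rho}_1-\ell$ on $\R^l$ (both sides of the latter being $+\infty$ outside $\bs{\Gamma}$).

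Next I would invoke two routine conjugation identities. For affine $\ell(\bs{u})=\langle a,\bs{u}\rangle+b$ and any set $S$ one computes $(\ell+\delta_S)^*(\bs{q})=\delta_S^*(\bs{q}-a)-b$, and, conjugating again, $(\ell+\delta_S)^{**}(\bs{u})=\langle a,\bs{u}\rangle+b+\delta_{\overline{\operatorname{conv}S}}(\bs{u})$, using that the conjugate of a support function is the indicator of the closed convex hull; with $S=\bs{\Gamma}$ (compact, so $\operatorname{conv}\bs{\Gamma}$ is closed) this gives $\bs{h}^{**}(\bs{u})=\ell(\bs{u})$ for $\bs{u}\in\operatorname{conv}\bs{\Gamma}$. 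Similarly, for any proper $f:\R^l\to\Rc$ the substitution $\bs{q}\mapsto\bs{q}+a$ in the defining suprema gives $(f-\ell)^*(\bs{q})=f^*(\bs{q}+a)+b$ and then $(f-\ell)^{**}=f^{**}-\ell$. Applying the second identity with $f=\bs{\rho}_1$ yields $\bs{\rho}_2^{**}(\bs{u})=\bs{\rho}_1^{**}(\bs{u})-\ell(\bs{u})=\bs{\rho}_1^{**}(\bs{u})-\langle p\tilde{\bs{\gamma}},\bs{u}\rangle-p\gamma_1$; and on the effective domain of $\bs{\rho}_1^{**}$, which by the standing assumptions on the data term is contained in $\operatorname{conv}\bs{\Gamma}$, the first identity lets us replace $\ell(\bs{u})$ by $\bs{h}^{**}(\bs{u})$, giving $\bs{\rho}_2^{**}=\bs{\rho}_1^{**}-\bs{h}^{**}$ there (off that domain both sides are $+\infty$). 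The constant $p\gamma_1$ does not depend on $\bs{u}$ and hence is immaterial for the lifted Bregman subproblem \eqref{eq:breg_uk_lifted}, which is why it is absent from the statement.

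The only step that is more than bookkeeping is the first one: one has to verify that the infimum over representations in \eqref{eq:def_data_integrand} opens no gap, i.e. that $\bs{\rho}$ genuinely equals $\rho\circ\pi$ on $\bs{\Gamma}$, and one has to keep track of effective domains, since $\bs{h}^{**}$ is $+\infty$ outside $\operatorname{conv}\bs{\Gamma}$ whereas the affine surrogate $\ell$ is finite everywhere — carrying $\ell$ rather than $\bs{h}^{**}$ through the computation is exactly what keeps the biconjugate manipulations clean and avoids spurious $\infty-\infty$ expressions. As the authors already point out, making this rigorous in the function-space (rather than pointwise) setting would require further care, but in the spatially discrete setting the argument above is complete.
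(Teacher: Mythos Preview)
Your proof is correct and follows essentially the same route as the paper: both arguments push an affine perturbation through the double Fenchel conjugate. The paper does this by writing out $\bs{\rho}_2^*$ and then $\bs{\rho}_2^{**}$ explicitly, while you package the same computation as the abstract identity $(f-\ell)^{**}=f^{**}-\ell$ for affine $\ell$; these are the same manipulation. You are in fact more careful than the paper on two points. First, you keep the additive constant $p\gamma_1$, which the paper silently drops already in its expression for $\bs{\rho}_2^*$ and hence in the final formula; your observation that this constant is immaterial for the minimization in \eqref{eq:breg_uk_lifted} is exactly the right way to reconcile your computation with the stated identity. Second, you are explicit that $\bs{h}^{**}$ agrees with the affine $\ell$ only on $\operatorname{conv}\bs{\Gamma}$ and attend to effective domains to avoid $\infty-\infty$, whereas the paper simply appeals to an external result for $\bs{h}^{**}$.
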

\begin{proof}[Proof of Proposition \ref{prop:lifted_sum}]
The proof is slightly technical and we only sketch it. By definition of the Fenchel conjugate and after some transformations, $\bs{\rho}_2^*$ becomes
\begin{align}
\bs{\rho}_2^*(\bs{v}) = \sup_{j\in\{1,...,l\}, \beta \in [0,1]} \left \{ \langle \bs{1}_j^{\beta}, \bs{v} + p \tilde{\bs{\gamma}} \rangle - \rho_1(\gamma_j^{\beta}) \right \}.
\end{align}
Applying the definition of the Fenchel conjugate once again eventually leads to 
\begin{align}
\bs{\rho}_2^{**}(\bs{u}) = \bs{\rho}_1^{**}(\bs{u}) - \langle  p \tilde{\bs{\gamma}} , \bs{u} \rangle .
\end{align}
Comparing this to \cite[Prop.~2]{sublabel_cvpr} we see that $\langle  p \tilde{\bs{\gamma}} , \bs{u} \rangle = \bs{h}^{**}(\bs{u})$.
\end{proof}

The following proposition shows that Alg.~1 and Alg.~2 are equivalent as long as we base the iteration on subgradients $p_{k-1}$ and $\bs{p}_{k-1}$ in the subdifferential of $J(u_{k-1})$ and $\bs{J}(\bs{u}_{k-1})$ that are linked in a particular way.

\begin{proposition} Assume that the minimization problems \eqref{eq:breg_uk} in the original Bregman iteration have unique solutions. Moreover, assume that in the lifted iteration, the solutions $\bs{u}_k$ of \eqref{eq:breg_uk_lifted} in each step satisfy $\bs{u}(x)\in\bs{\Gamma}$, i.e., are sublabel-integral. If at every point $x$ the chosen subgradients $p_{k-1}\in\partial J(u_{k-1})$ and $\bs{p}_{k-1} \in\partial \bs{J}(\bs{u}_{k-1})$ satisfy 
\bfl 
\bs{p}_{k-1}(x) \quad = 
\quad p_{k-1}(x) \tilde{\bs{\gamma}} 
\label{eq:lifted_grad}
\efl
with $\tilde{\bs{\gamma}}$ as in \eqref{eq:bs_gamma}, then the lifted iterates $\bs{u}_k$ correspond to the iterates $u_k$ of the classical Bregman iteration \eqref{eq:breg_uk} according to \eqref{eq:projection}.
\label{prop:equivalency}
\end{proposition}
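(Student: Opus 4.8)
\mypar{Proof proposal.}
The plan is to argue by induction on the iteration index $k$, with Proposition~\ref{prop:lifted_sum} at the heart: it says that extending the lifted data term by the linear term $-\langle\bs{p}_{k-1},\bs{u}\rangle$ is the same as first extending the data term by $-\langle p_{k-1},u\rangle$ and then lifting. Together with the tightness of the sublabel-accurate lifting on sublabel-integral functions (Section~\ref{subsec:lifting} and \cite{lifting_global_solutions,sublabel_cvpr,sublabel_discretization}) this should identify the lifted Bregman step \eqref{eq:breg_uk_lifted} with the sublabel-accurate lifting of the classical step \eqref{eq:breg_uk}, and uniqueness then pins down the iterates. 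The induction hypothesis would be that $\bs{u}_{k-1}$ equals the lifted representation $\bs{1}_{u_{k-1}}$ of the $(k-1)$-st classical iterate (equivalently: $\bs{u}_{k-1}$ is sublabel-integral and its projection \eqref{eq:projection} is $u_{k-1}$). The base case $k=1$ is immediate, since $p_0 = 0$ and $\bs{p}_0 = 0 = p_0\tilde{\bs{\gamma}}$, so \eqref{eq:lifted_grad} holds trivially for index $0$ and nothing has to be known about a (non-existent) ``$u_0$''.

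For the inductive step I would fix $x\in\Omega$ (and suppress it), use \eqref{eq:lifted_grad} to write the perturbation in \eqref{eq:breg_uk_lifted} as $\langle\bs{p}_{k-1},\bs{u}\rangle = p_{k-1}\langle\tilde{\bs{\gamma}},\bs{u}\rangle$, and apply Proposition~\ref{prop:lifted_sum} with $\rho_1 := \rho$ and $h(u) := p_{k-1}u$ to get, pointwise,
\bfl
\bs{\rho}^{**}(\bs{u}) - p_{k-1}\langle\tilde{\bs{\gamma}},\bs{u}\rangle \;=\; \bs{\rho}_2^{**}(\bs{u}), \qquad \rho_2(u) := \rho(u) - p_{k-1}u .
\efl
Hence the lifted Bregman objective equals $\into \bs{\rho}_2^{**}(x,\bs{u}(x))\dx + \bs{J}(\bs{u})$, which is precisely the sublabel-accurate lifting (same discretization, same lifted regularizer $\bs{J}$) of $u\mapsto \into\rho_2(x,u(x))\dx + J(u) = H(u)+J(u)-\langle p_{k-1},u\rangle$; by hypothesis $p_{k-1}\in\partial J(u_{k-1})$ is exactly the subgradient defining the $k$-th classical step \eqref{eq:breg_uk}, whose unique solution is $u_k$. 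Finally, writing $u_k^* := \text{proj}(\bs{u}_k)$ via \eqref{eq:projection}: since $\bs{u}_k$ is assumed sublabel-integral it equals $\bs{1}_{u_k^*}$, and the lifting identity (the lifted energy of a lifted function equals the original energy) gives that the lifted value at $\bs{u}_k$ equals the classical value at $u_k^*$. As $\bs{u}_k$ minimizes the lifted problem, which relaxes the classical one, $u_k^*$ minimizes \eqref{eq:breg_uk}; by the assumed uniqueness $u_k^* = u_k$, closing the induction.

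The hard part will be making the last step rigorous: the equivalence between the lifted problem and its classical counterpart on sublabel-integral functions requires, in function space, both the identity $\bs{H}(\bs{1}_u)+\bs{J}(\bs{1}_u) = H(u)+J(u)$ for the discretized, sublabel-accurate lifting and a thresholding/no-gap argument --- and, as the authors note, such arguments are only formal here, with the global-optimality ingredient genuinely depending on the regularizer (available for the anisotropic TV of Section~4, not for isotropic TV). A second, more structural point is that the hypothesis \eqref{eq:lifted_grad} is only exploitable if a subgradient of the form $p_{k-1}\tilde{\bs{\gamma}}$ can actually be selected from $\partial\bs{J}(\bs{u}_{k-1})$; this compatibility of the subdifferentials of $J$ and $\bs{J}$ is precisely what the next chapter establishes for the anisotropic TV. Minor caveats: $\rho_2$ need no longer be nonnegative (harmless, since the conjugation arguments only use properness), and the pointwise-in-$x$ reasoning must still be turned into an integrated statement over $\Omega$.
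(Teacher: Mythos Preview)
Your proposal is correct and follows essentially the same route as the paper: use Proposition~\ref{prop:lifted_sum} to identify the lifted Bregman step \eqref{eq:breg_uk_lifted} with the sublabel-accurate lifting of the classical step \eqref{eq:breg_uk}, then invoke sublabel-integrality together with uniqueness to match the iterates via \eqref{eq:projection}. You are somewhat more explicit than the paper about the induction structure and the tightness argument needed to pass from a sublabel-integral lifted minimizer back to a minimizer of the original problem, but the core idea is identical.
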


\begin{proof}[Proof of Proposition \ref{prop:equivalency}]
We define the \emph{extended data term}
\begin{equation}
\tilde{H}(u) := \into \rho(x,u(x)) - p(x) u(x) \dx,
\end{equation}
which incorporates the linear term of the Bregman iteration.
Using Prop.~\ref{prop:lifted_sum}, we reach the following lifted representation:
\begin{align}
\tilde{\bs{H}}(\bs{u}) &= \into \bs{\rho}^{**}(x,\bs{u}(x)) - \langle p(x)\tilde{\bs{\gamma}}, \bs{u}(x) \rangle \dx .
\end{align}
Hence the lifted version of \eqref{eq:breg_uk} is
\begin{equation}
\arg\min_{\bs{u}\in\bs{U}} \left \{ \bs{H}(\bs{u}) +\bs{J}(\bs{u}) - \langle p_{k-1} \tilde{\bs{\gamma}}, \bs{u} \rangle  \right \}.
\end{equation}
Comparing this to  \eqref{eq:breg_uk_lifted} shows that the minimization problem in the lifted iteration is the lifted version of \eqref{eq:breg_uk} if the subgradients  $p_{k-1}\in\partial J(u_{k-1})$ and $\bs{p}_{k-1} \in\partial \bs{J}(\bs{u}_{k-1})$ satisfy $ \bs{p}_{k-1} = p_{k-1} \tilde{\bs{\gamma}}$.
 In this case, since we have assumed that the solution of the lifted problem \eqref{eq:breg_uk_lifted} is sublabel-integral, it can be associated via \eqref{eq:projection} with the solution of the original problem \eqref{eq:breg_uk}, which is unique by assumption. 
\end{proof}
Thus, under the condition of the proposition, the lifted and unlifted Bregman iterations are equivalent.
\section{Numerical Discussion and Results}
In this section, we consider the spatially discretized problem on a finite discretized domain~$\Omega^h$ with grid spacing $h$. 
In particular, we will see that the subgradient condition in Prop.~\ref{prop:equivalency} can be met in case of anisotropic TV and how such subgradients can be obtained in practice.

\mypar{Finding a subgradient.}
The discretized, sublabel-accurate relaxed total variation is  of the form
\begin{flalign}
        J^h(\nabla \bs{u}^h) &= \max_{\bs{q}^h:\Omega^h \rightarrow\R^{k\times d}} \left \{ \sum_{x \in \Omega^h}  \langle \bs{q}^h(x), \nabla \bs{u}^h(x) \rangle - \delta_\mathcal{K}(\bs{q}^h(x)) \right \},
\label{eq:q}
\end{flalign}
with $\mathcal{K}$ defined by \eqref{eq:K_iso} or \eqref{eq:K_an} and $\nabla$ denoting the discretized forward-difference operator. By standard convex analysis (\cite[Thm.~23.9]{book_rock_convex}, \cite[Cor.~10.9]{book_rock_variational}, \cite[Prop.~11.3]{book_rock_variational}) we can show that if $\bs{q}^h$ is a  maximizer of \eqref{eq:q}, then $\bs{p}^h := \nabla^{\top}\bs{q}^h$ is a subgradient of $J^h(\nabla \bs{u}^h)$. Thus, the step of choosing a subgradient \eqref{eq:breg_qk_lifted} boils down to $\bs{p}_k^h = \nabla^\top \bs{q}_k^h$ and for the dual maximizer $\bs{q}_{k-1}^h$ of the last iteration we implement  \eqref{eq:breg_uk_lifted} as:
\begin{flalign}
\bs{u}^h_k = \arg\min_{\bs{u}^h:\Omega^h \mapsto \R^l} \max_{\bs{q}^h_k:\Omega^h \mapsto\mathcal{K}} \sum_{x\in\Omega^h} (\bs{\rho}^h)^{**}(x,\bs{u}^h(x)) + \langle \bs{q}^h_k - \bs{q}^h_{k-1}, \nabla \bs{u}^h \rangle .
\end{flalign}

\mypar{Transforming the subgradient.}
In Prop.~\ref{prop:equivalency} we formulated a constraint on the subgradients for which the original and lifted Bregman iteration are equivalent. While this property is not necessarily satisfied if the subgradient $\bs{p}^h_{k-1}$ is chosen according to the previous paragraph, we will now show that any such subgradient can be transformed into another valid subgradient that satisfies condition \eqref{eq:lifted_grad}.

Consider a pointwise sublabel-integral solution $\bs{u}^h_k$ with subgradient \linebreak $\bs{p}^h_k := \nabla^\top \bs{q}^h_k \in \partial \bs{J}^h(\bs{u}^h_k)$ for $\bs{q}^h_k(\cdot) \in \mathcal{K}$ being a maximizer of \eqref{eq:q}. We define a pointwise transformation: For fixed $x^m \in \Omega^h$ and $\bs{u}^h_k(x^m) = \bs{1_i^\alpha}$, let $(\bs{q}^h_k(x^m))^i \in \R^d$ denote the $i$-th row of $\bs{q}^h_k(x^m)$ corresponding to the $i$-th label as prescribed by $\bs{u}^h_k(x^m) = \bs{1}_i^\alpha$. Both in the isotropic and anisotropic case the transformation
\bfl 
        \tilde{\bs{q}}^h_k(x^m) := \frac{(\bs{q}^h_k(x^m))^i}{\gamma_{i+1} - \gamma_i} \tilde{\bs{\gamma}}
        \label{eq:gradient_transform}
\efl
returns an element of the set $\mathcal{K}$, i.e., $\mathcal{K}_{\text{iso}}$ or $\mathcal{K}_{\text{an}}$. In the anisotropic case we can furthermore show that $\tilde{\bs{q}}^h_k$ also maximizes \eqref{eq:q} and therefore the transformation gives a subgradient $\tilde{\bs{p}}^h_k := \nabla^\top \tilde{ \bs{q}}^h_k \in \partial \bs{J}^h(\bs{u}^h_k)$ of the desired form \eqref{eq:lifted_grad}:

\begin{proposition} 
Consider the anisotropic TV-regularized case \eqref{eq:K_an}. Assume that the  iterate $\bs{u}^h_{k}$ is sublabel-integral. Moreover, assume that $\bs{p}^h_k := \nabla^\top \bs{q}^h_k$ is a subgradient in $\partial \bs{J}^h(\bs{u}^h_{k})$ and define $\tilde{\bs{q}}^h_k$ pointwise as in \eqref{eq:gradient_transform}. Then $\tilde{\bs{p}}^h_k := \nabla^\top \tilde{ \bs{q}}^h_k$ is also a subgradient and furthermore of the form
\bfl 
\tilde{\bs{p}}^h_k \quad = \quad 
p^h_{k} \tilde{\bs{\gamma}}^h,\label{eq:ptildapkm1}
\efl
where $p^h_{k}$ is a subgradient in the unlifted case, i.e., $p^h_{k}\in\partial J^h(u^h_{k})$.
\label{prop:gradient_an}
\end{proposition}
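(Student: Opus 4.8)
The plan is to reduce the claim to pointwise statements about the dual variable at a single node and then to exploit the staircase structure that sublabel-integrality forces on the discrete gradient. Throughout I fix a node $x^m$, write $\bs{u}^h_k(x^m)=\bs{1}_i^\alpha$, and abbreviate $q:=(\bs{q}^h_k(x^m))^i/(\gamma_{i+1}-\gamma_i)\in\R^d$, so that by \eqref{eq:gradient_transform} the $j$-th row of $\tilde{\bs{q}}^h_k(x^m)$ is exactly $(\gamma_{j+1}-\gamma_j)\,q$; equivalently $\tilde{\bs{q}}^h_k(x^m)=\tilde{\bs{\gamma}}\,q^{\top}$ as an $l\times d$ matrix. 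Since $\bs{J}^h(\bs{u}^h)=J^h(\nabla\bs{u}^h)=\sum_x\sigma_{\mathcal{K}_{\text{an}}}(\nabla\bs{u}^h(x))$, where $\sigma_{\mathcal{K}_{\text{an}}}$ is the support function of $\mathcal{K}_{\text{an}}$, the same convex-analytic facts already invoked for \eqref{eq:q} say that $\nabla^{\top}\bs{q}^h\in\partial\bs{J}^h(\bs{u}^h)$ precisely when at every node $\bs{q}^h(x)\in\mathcal{K}_{\text{an}}$ and $\langle\bs{q}^h(x),\nabla\bs{u}^h(x)\rangle=\sigma_{\mathcal{K}_{\text{an}}}(\nabla\bs{u}^h(x))$; the analogous characterization holds for the unlifted $J^h$, whose dual ball is $\{q\in\R^d:\|q\|_\infty\le1\}$. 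So it suffices to verify feasibility and attainment of the support function, both for $\tilde{\bs{q}}^h_k(x^m)$ and for $q$.

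The heart of the argument is a structural lemma about $\nabla\bs{u}^h_k(x^m)$. Writing the forward neighbour as $\bs{u}^h_k(x^m+he_n)=\bs{1}_{i'}^{\alpha'}$ and simply inspecting $\bs{1}_{i'}^{\alpha'}-\bs{1}_i^\alpha$, I would show: (i) for each direction $n$ all nonzero entries of the column $(\nabla\bs{u}^h_k(x^m))_{\cdot,n}\in\R^l$ carry the sign of $(\nabla u^h_k(x^m))_n=u^h_k(x^m+he_n)-u^h_k(x^m)$; (ii) by the reconstruction formula \eqref{eq:projection} one has the telescoping identity $\sum_j(\gamma_{j+1}-\gamma_j)(\nabla\bs{u}^h_k(x^m))_{j,n}=(\nabla u^h_k(x^m))_n$ and, together with (i), also $\sum_j(\gamma_{j+1}-\gamma_j)|(\nabla\bs{u}^h_k(x^m))_{j,n}|=|(\nabla u^h_k(x^m))_n|$; (iii) whenever $(\nabla u^h_k(x^m))_n\neq0$, the $i$-th row is itself nonzero. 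Because $\mathcal{K}_{\text{an}}$ is a box, maximality of $\bs{q}^h_k(x^m)$ holds entrywise, so (iii) forces $(\bs{q}^h_k(x^m))_{i,n}=(\gamma_{i+1}-\gamma_i)\sgn((\nabla u^h_k(x^m))_n)$ whenever $(\nabla u^h_k(x^m))_n\neq0$, i.e. $q_n=\sgn((\nabla u^h_k(x^m))_n)$ on those directions, while $\|q\|_\infty\le1$ always (since $\|(\bs{q}^h_k(x^m))^i\|_\infty\le\gamma_{i+1}-\gamma_i$).

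From here the rest is bookkeeping. Feasibility of $\tilde{\bs{q}}^h_k(x^m)$ in $\mathcal{K}_{\text{an}}$ is immediate, as its $j$-th row has $\ell_\infty$-norm $(\gamma_{j+1}-\gamma_j)\|q\|_\infty\le\gamma_{j+1}-\gamma_j$. For attainment I would evaluate $\langle\tilde{\bs{q}}^h_k(x^m),\nabla\bs{u}^h_k(x^m)\rangle$ by summing over labels $j$ first: by the row structure this is $\sum_n q_n\sum_j(\gamma_{j+1}-\gamma_j)(\nabla\bs{u}^h_k(x^m))_{j,n}$, which collapses via (ii) to $\sum_n q_n(\nabla u^h_k(x^m))_n$; the directions with vanishing gradient drop out, on the remaining ones $q_n=\sgn((\nabla u^h_k(x^m))_n)$, so the inner product equals $\sum_n|(\nabla u^h_k(x^m))_n|$, which by (ii) again is $\sigma_{\mathcal{K}_{\text{an}}}(\nabla\bs{u}^h_k(x^m))$. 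Hence $\tilde{\bs{q}}^h_k$ is feasible and attains the support function pointwise, so $\tilde{\bs{p}}^h_k=\nabla^{\top}\tilde{\bs{q}}^h_k\in\partial\bs{J}^h(\bs{u}^h_k)$. For the prescribed form I would use that $\tilde{\bs{q}}^h_k(x)=\tilde{\bs{\gamma}}\,q^h_k(x)^{\top}$ with $q^h_k(x):=(\bs{q}^h_k(x))^{i(x)}/(\gamma_{i(x)+1}-\gamma_{i(x)})$, and that the discrete operator $\nabla^{\top}$ acts separately within each label row, so $\nabla^{\top}\tilde{\bs{q}}^h_k=p^h_k\,\tilde{\bs{\gamma}}$ with $p^h_k:=\nabla^{\top}q^h_k$, which is \eqref{eq:ptildapkm1} (with $\tilde{\bs{\gamma}}^h=\tilde{\bs{\gamma}}$). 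Finally $p^h_k\in\partial J^h(u^h_k)$ follows from the unlifted characterization in the first paragraph, since the computation above showed $\|q^h_k(x^m)\|_\infty\le1$ and $\langle q^h_k(x^m),\nabla u^h_k(x^m)\rangle=\sum_n|(\nabla u^h_k(x^m))_n|=\|\nabla u^h_k(x^m)\|_1$.

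The step I expect to be the genuine obstacle is part (iii) of the structural lemma in the degenerate case $\alpha\in\{0,1\}$, i.e. when $\bs{u}^h_k(x^m)$ is a pure label: if such a node has a strictly larger neighbour in one direction and a strictly smaller one in another, the $i$-th row of $\nabla\bs{u}^h_k(x^m)$ vanishes in some direction, the maximizer $\bs{q}^h_k(x^m)$ is undetermined there, and $\tilde{\bs{q}}^h_k$ assembled from \eqref{eq:gradient_transform} need not attain the support function. Resolving this requires either restricting to the generic situation $\alpha\in(0,1)$ (using $\bs{1}_i^1=\bs{1}_{i+1}^0$ to disambiguate where possible) or re-picking the free entries of $\bs{q}^h_k$ so their sign matches $\nabla u^h_k$ before applying \eqref{eq:gradient_transform} -- consistent with the paper's caveat that these considerations are formal. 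I would also point out that anisotropy enters essentially only through the entrywise (box) structure of $\mathcal{K}_{\text{an}}$ used in (iii), which is why the analogous maximality statement is not available for $\mathcal{K}_{\text{iso}}$.
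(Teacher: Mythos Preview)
Your argument is correct and rests on the same core idea as the paper's: exploit the staircase structure that sublabel-integrality forces on $\nabla\bs{u}^h_k$ together with the separable box geometry of $\mathcal{K}_{\text{an}}$ to see that the $i$-th row of any dual maximizer $\bs{q}^h_k(x^m)$ is pinned to $\pm(\gamma_{i+1}-\gamma_i)$, so the transformation \eqref{eq:gradient_transform} returns another maximizer. The paper packages this differently: it first reduces to $d=1$ by separability, then writes out $\nabla\bs{u}^h_k(x^m)$ explicitly in the three cases $i<j$, $i=j$, $i>j$ and reads off the maximizer set \eqref{eq:Kan_minimizers} directly, concluding that $\tilde{\bs{q}}^h_k(x^m)=\pm\tilde{\bs{\gamma}}$; for the unlifted claim it invokes the $L=2$ identification of the sublabel-accurate framework rather than your direct support-function computation $\langle q,\nabla u^h_k\rangle=\|\nabla u^h_k\|_1$. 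Your structural lemma (i)--(iii) and the telescoping identity from \eqref{eq:projection} are a cleaner, dimension-independent repackaging of the same facts. One place where you actually go further than the paper: your discussion of the degeneracy $\alpha\in\{0,1\}$ is accurate and exposes a point the paper passes over -- the displayed maximizer set \eqref{eq:Kan_minimizers} is the complete characterization only when $1-\alpha$ and $\beta$ are nonzero, so the $i$-th row of $\bs{q}^h_k$ can indeed be free at a pure label, and your remark that in $d>1$ the mixed-sign configuration cannot be repaired by the reindexing $\bs{1}_i^1=\bs{1}_{i+1}^0$ is correct.
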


\begin{proof}[Proof of Proposition \ref{prop:gradient_an}]

In the anisotropic case the spatial dimensions are uncoupled, therefore w.l.o.g. assume $d=1$. Consider two neighboring points $x^m$ and $x^{m+1}$ with $\bs{u}^h_k(x^m) = \bs{1}_i^\alpha$ and $\bs{u}^h_k(x^{m+1}) = \bs{1}_j^\beta$. Applying the forward difference operator gives 
\begin{flalign}
\nabla \bs{u}^h_k(x^m) = \frac{1}{h}\begin{cases}
        (\bs{0}_{i-1}, \quad 1-\alpha, \quad \bs{1}_{j-i-2}, \quad \beta, \quad \bs{0}_{l-j})^\top, & i < j, \\
        (\bs{0}_{i-1}, \quad \beta - \alpha, \quad \bs{0}_{l-i})^\top, & i = j, \\
        (\bs{0}_{j-1}, \quad \beta -1, \quad -\bs{1}_{i-j-2}, \quad -\alpha, \quad \bs{0}_{l-j})^\top, & i > j. \\
\end{cases}
\end{flalign}
Maximizers $\bs{q}^h_k(x^m) \in \mathcal{K}_{an}$ of the dual problem \eqref{eq:q} are exactly all vectors 
\begin{flalign}
\bs{q}^h_k(x^m) = \begin{cases}
        (***, \quad \gamma_{i+1}-\gamma_i, \quad ... , \quad \gamma_{j+1} - \gamma_j, \quad ***)^\top, & i < j, \\
        (***, \quad \sgn(\beta - \alpha) (\gamma_{i+1}-\gamma_i), \quad ***)^\top, & i = j, \\
        (***, \quad \gamma_{j} - \gamma_{j+1} , \quad ... , \quad \gamma_i - \gamma_{i+1}, \quad ***)^\top, & i > j. \\
\end{cases}
\label{eq:Kan_minimizers}
\end{flalign}
The elements marked with $*$ can be chosen arbitrarily as long as \linebreak $\bs{q}^h_k(x^m)\in\mathcal{K}_{an}$. Due to this special form, the transformation \eqref{eq:gradient_transform} leads to $ \tilde{\bs{q}}^h_k(x^m) = \pm \tilde{\bs{\gamma}}$  depending on the case. Crucially, this transformed vector is another equally valid choice in~\eqref{eq:Kan_minimizers} and therefore \eqref{eq:gradient_transform} returns another valid subgradient $\tilde{\bs{p}}^h_k = \nabla^\top \tilde{\bs{q}}^h_k$.

In order to show that $p_k^h= \nabla^\top q_k^h$ for $q_k^h(\cdot)= \pm 1$ is a subgradient in the unlifted setting we use the same arguments. To this end, we use  the sublabel-accurate notation with $L=2$. The ``lifted'' label space is  $\bs{\Gamma} = [0,1]$, independently of the actual $\Gamma \subset \R$; see \cite[Prop.~3]{sublabel_cvpr}. Then with $u^h_k(x^m) = \gamma_i^\alpha$ and $u^h_k(x^{m+1}) = \gamma_j^\beta$ (corresponding to $\bs{1}_i^\alpha$ and $\bs{1}_j^\beta$ from before), applying the forward difference operator $\nabla u^h_k(x^m) = \frac{1}{h} ( \gamma_j^\beta - \gamma_i^\alpha )$ shows that dual maximizers are $q^h_k(x^m) = \sgn(\gamma_j^\beta - \gamma_i^\alpha) | \bs{\Gamma} | = \pm 1$. It can be seen that the algebraic signs coincide pointwise in the lifted and unlifted setting.  Thus $p^h_{k}$ in \eqref{eq:ptildapkm1} is of the form $p^h_k = \nabla^\top q_k^h$ and in particular a subgradient in the unlifted setting.

\end{proof}


\newcommand{\circimg}[1]{\includegraphics[width=.14\linewidth]{#1}}
\begin{figure}[t]
  \begin{center}
  \begin{tabular}{lllll}
    \circimg{./Images/numerical_results/rof_bregman/anisotropic/L2/rof1}&
    \circimg{./Images/numerical_results/rof_bregman/anisotropic/L2/rof2}&
    \circimg{./Images/numerical_results/rof_bregman/anisotropic/L2/rof3}&
    \circimg{./Images/numerical_results/rof_bregman/anisotropic/L2/rof4}&
    \circimg{./Images/numerical_results/rof_bregman/anisotropic/L2/rof50}\\
    \circimg{./Images/numerical_results/rof_bregman/anisotropic/L5/rof1}&
    \circimg{./Images/numerical_results/rof_bregman/anisotropic/L5/rof2}&
    \circimg{./Images/numerical_results/rof_bregman/anisotropic/L5/rof3}&
    \circimg{./Images/numerical_results/rof_bregman/anisotropic/L5/rof4}&
    \circimg{./Images/numerical_results/rof_bregman/anisotropic/L5/rof50}\\
    \circimg{./Images/numerical_results/rof_bregman/anisotropic/L5p/rof1}&
    \circimg{./Images/numerical_results/rof_bregman/anisotropic/L5p/rof2}&
    \circimg{./Images/numerical_results/rof_bregman/anisotropic/L5p/rof3}&
    \circimg{./Images/numerical_results/rof_bregman/anisotropic/L5p/rof4}&
    \includegraphics[width=.21\linewidth]{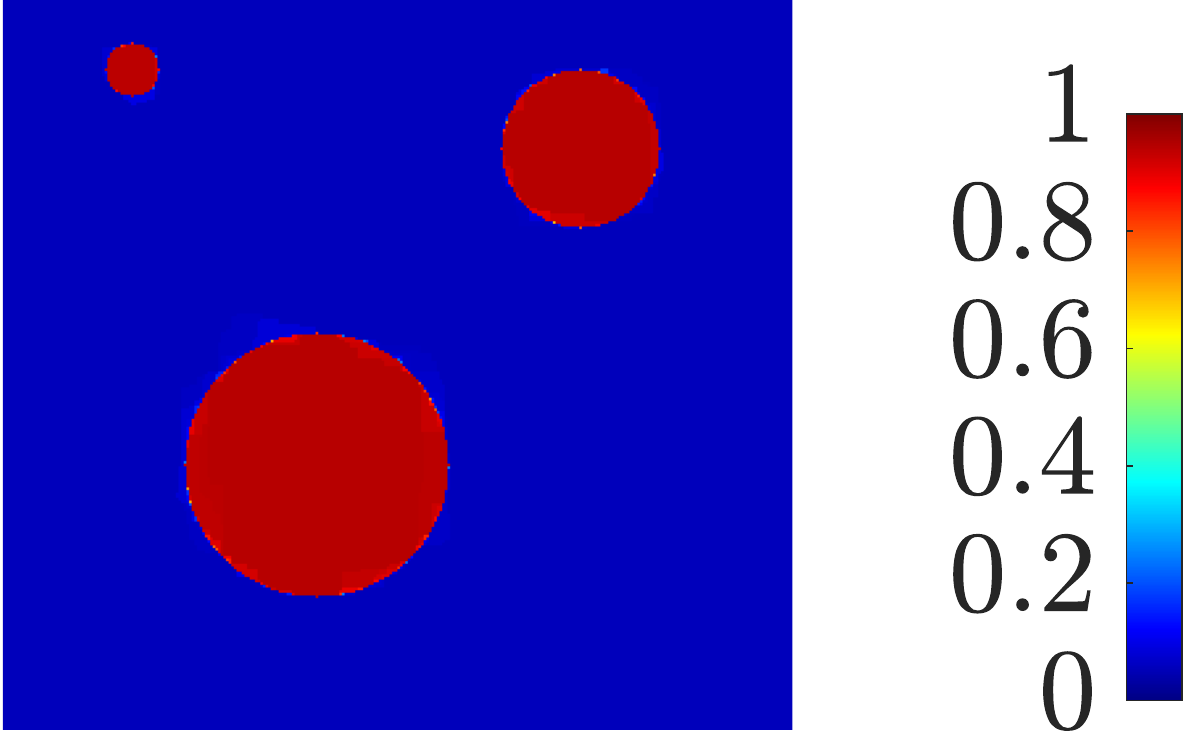}\\
    $\qquad k=1$&
    $\qquad k=2$&
    $\qquad k=3$&
    $\qquad k=4$&
    $\qquad k=50$
  \end{tabular}%
  \end{center}%
\vspace{-1em}\caption{\textbf{Equivalency of classical and lifted Bregman on a convex problem.} On the \emph{convex} ROF-\eqref{eq:problem} problem with anisotropic TV, with a na{\"i}ve implementation, the classical Bregman iteration as in Alg.~1 \textbf{(top row)} and the lifted generalization as in Alg.~2 \textbf{(middle row)} show clear differences. If the lifted subgradients are transformed as in Prop.~\ref{prop:equivalency}, the lifted iterates \textbf{(bottom row)} are visually indistinguishable from the classical iteration. However, the lifted version also allows to transparently handle nonconvex energies (Fig.~\ref{fig:results2}).
}
\label{fig:rof}
\end{figure}


\mypar{Convex energy with artificial data.}
We compare the results of the original and lifted Bregman iteration for the ROF-\eqref{eq:problem} problem with $\lambda =20$, synthetic input data and anisotropic TV regularizer. In the lifted setting, we compare implementations with and without transforming the subgradients as in~\eqref{eq:gradient_transform}. 
The results shown in Fig.~\ref{fig:rof} clearly support the theory: Once subgradients are transformed as in Prop.~\ref{prop:equivalency}, the iterates agree with the classical, unlifted iteration.

A subtle issue concerns points where the minimizer of the lifted energy is non-sublabel-integral, i.e., cannot be easily identified with a solution of the original problem. This impedes the recovery of a suitable subgradient as in \eqref{eq:projection}, which leads to diverging Bregman iterations. We found this issue to occur in particular with isotropic TV discretization, which does not satisfy a discrete version of the coarea formula -- which is used to prove in the continuous setting that solutions of the original problem can be recovered by thresholding -- but is also visible to a smaller extent around the boundaries of the objects in Fig.~\ref{fig:rof}. 

\mypar{Non-convex stereo matching with real-world data.} Let us demonstrate the applicability of the lifted Bregman iteration on a non-convex stereo-matching problem for depth estimation.  We use TV-\eqref{eq:problem} with data term 
\bfl \rho(x,u(x)) =   \int_{W(x)} \sum_{d=1,2} h(\partial_{x_d} I_1((y_1,y_2+u(x))) - \partial_{x_d} I_2((y_1,y_2))),
\label{eq:stereo_matching_data_term}
\efl
where $W(x)$ denotes a patch around $x$ and $h(\alpha) := \min \{ \alpha, \beta \} $ is a truncation with threshold $\beta >0$. This data term is non-convex and non-linear in~$u$. We apply the lifted Bregman iteration on three data sets \cite{data_middlebury_bike} with $L=5$ labels, iso-tropic TV regularizer and untransformed subgradients. For results see Fig.~\ref{fig:stereo_profile}, \ref{fig:results2} (Motorbike: $\lambda =20$, $k=30$; Umbrella: $\lambda = 10$; Backpack: $\lambda = 25$). We also ran the experiment with an anisotropic TV regularizer as well as transformed subgradients. Overall, the behavior was similar, but transforming the subgradients led to more pronounced jumps.
Interestingly, even in this non-convex case the solution of the lifted Bregman iteration also strongly reminds of an ISS flow: The first solution is a smooth estimation; as the iteration continues, finer structures are added. This behavior is also visible in the progression of the profiles in Fig.~\ref{fig:stereo_profile}.

\newcommand{\circimgst}[1]{\includegraphics[width=.165\textwidth]{#1}}

\begin{figure}[t]
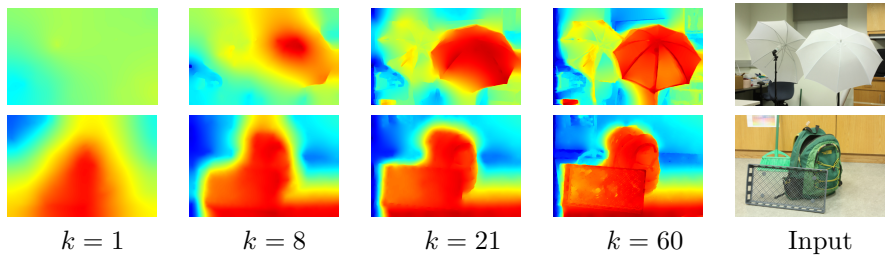

  \begin{center}
  \begin{tabular}{lllll}
    \circimgst{./Images/numerical_results/umbrella/isotropic/none/umb_1}&
    \circimgst{./Images/numerical_results/umbrella/isotropic/none/umb_8}&
    \circimgst{./Images/numerical_results/umbrella/isotropic/none/umb_21}&
    \circimgst{./Images/numerical_results/umbrella/isotropic/none/umb_60} &
    \circimgst{./Images/numerical_results/umbrella/umbrella} \\   
    \circimgst{./Images/numerical_results/backpack/isotropic/none/back_1}&
    \circimgst{./Images/numerical_results/backpack/isotropic/none/back_8}&
    \circimgst{./Images/numerical_results/backpack/isotropic/none/back_21}&
    \circimgst{./Images/numerical_results/backpack/isotropic/none/back_60}  &
    \circimgst{./Images/numerical_results/backpack/backpack} \\
    $\qquad k=1$ & $\qquad k=8$ & $\qquad k=21$ & $\qquad k=60$ & $\qquad \text{Input}$ 
  \end{tabular}%
  \end{center}%
\vspace{-1em}
\caption{\textbf{Lifted Bregman on stereo matching problem with isotropic TV.} TV-\eqref{eq:problem} problem with data term \eqref{eq:stereo_matching_data_term}. This problem is non-convex and non-linear in $u$. At $k=1$ the solution is a coarse approximation of the depth field. As the iteration advances, details are progressively recovered. Although the problem is not of the 
form OH-\eqref{eq:problem} classically associated with the ISS 
flow, the results show a qualitative similarity to a nonlinear scale space for this non-convex problem. }
\label{fig:results2}
\end{figure}

\mypar{Conclusion.}
We proposed a combination of the Bregman iteration and a lifting approach with sublabel-accurate discretization in order to extend the Bregman iteration to non-convex energies. If a certain form of the subgradients can be ensured -- which can be shown to be the case with the convex ROF-\eqref{eq:problem} problem and anisotropic TV -- the iterates agree in theory and in practice. In the future, it will be interesting to see if such methods can lead to the development of scale space transformations and nonlinear filters for arbitrary nonconvex data terms. 

\mypar{Acknowledgments.} The authors acknowledge support through DFG grant LE 4064/1-1 “Functional Lifting 2.0: Efficient Convexifications for Imaging and Vision” and NVIDIA Corporation.

\bibliographystyle{abbrv}
\bibliography{InverseScaleSpaceIterationsUsingFunctionalLifting}

\end{document}